\theoremstyle{remark}{
\newtheorem{Def}{{\rm Definition}}

\newtheorem{Rem}{{\rm Remark}}
\newtheorem{Prob}{{\rm Problem}}

}
\theoremstyle{plain}
{

\newtheorem{Prop}{Proposition}

\newtheorem{MainThm}{Main Theorem}

}
\begin{document}
\title[Non-proper smooth real algebraic functions on smooth manifolds]{Explicit smooth real algebraic functions which may have both compact and non-compact preimages on non-singular real algebraic manifolds}
\author{Naoki kitazawa}
\keywords{Real algebraic functions and maps. Real algebraic sets. Real algebraic hypersurfaces. Hypersurface arrangements. Reeb graphs. \\
\indent {\it \textup{2020} Mathematics Subject Classification}: Primary~14P05, 14P15, 14P25, 57R45, 58C05. Secondary~52C17, 57R19.}
\address{Osaka Central
	Advanced Mathematical Institute, 3-3-138 Sugimoto, Sumiyoshi-ku Osaka 558-8585 \\
	TEL (Office): +81-6-6605-3103 \\
	FAX (Office): +81-6-6605-3104 \\
}
\email{naokikitazawa.formath@gmail.com}
\urladdr{https://naokikitazawa.github.io/NaokiKitazawa.html}
\maketitle
\begin{abstract}
In our previous preprint, which is withdrawn due to an improvement, we have constructed explicit smooth real algebraic functions which may have both compact and non-compact preimages on {\it non-singular} real algebraic manifolds. This paper presents its variant. Our result is new in obtaining non-proper smooth real algebraic functions on non-singular real algebraic manifolds satisfying explicit conditions on (non-)compactness of preimages whereas previously the manifolds are only {\it semi-algebraic}.

Explicitly, this mainly contributes to two different regions. One is singularity theory of differentiable maps and applications to differential topology. More precisely, construction of nice smooth maps with desired preimages. The other is real algebraic geometry. More precisely, explicit construction of smooth real algebraic functions and maps: we can know the existence and consider approximations of smooth maps by maps of such classes in considerable cases.

\end{abstract}
\section{Introduction.}
\label{sec:1}

Our paper is on new explicit construction of smooth real algebraic functions and maps on non-singular real algebraic manifolds. This is also a variant \cite{kitazawa7}, which is withdrawn due to an improvement. Presentations of \cite{kitazawa7} and our present paper are similar in considerable scenes.

\subsection{Two interest related to our study: one is from singularity theory of differentiable maps and its applications to differential topology and the other is from real algebraic geometry.}
Related to our study, we present a study on singularity theory of differentiable maps and applications to differential topology.
Previously, the author has constructed smooth functions with prescribed preimages explicitly in \cite{kitazawa1, kitazawa2, kitazawa5}. These are answers to a problem of the author, a revised version of Sharko's problem. Sharko asks whether we can obtain smooth functions whose {\it Reeb graphs} are as desired. 
\cite{masumotosaeki, michalak} are among explicit answers to Sharko's original question and they have also motivated the author to obtain the results.
The {\it Reeb graph} of a smooth function is a graph obtained as the space of connected components of preimages of smooth functions of some nice wide class. Such a class is studied in \cite{saeki} for example. Reeb graphs are classical objects already in \cite{reeb}. They inherit several important topological information of the manifolds.

We are also interested in real algebraic geometry. According to general theory, pioneered by Nash for example, smooth manifolds are so-called {\it non-singular} real algebraic manifolds and we can approximate smooth maps by smooth real algebraic maps and know the existence in considerable cases. \cite{akbulutking, bochnakcosteroy, kollar, kucharz, nash, shiota, tognoli} explain about this for example.
Our interest lies in explicit construction of explicit smooth real algebraic functions or maps of non-positive codimensions. Natural projections of spheres embedded naturally in the one-dimensional higher Euclidean spaces are simplest examples on closed manifolds. As functions regarded as generalized cases, see \cite{maciasvirgospereirasaez, ramanujam, takeuchi} for example.
They are given by suitable real polynomials. 

Under these backgrounds, we consider the following problem.
\begin{Prob}
On the other hands, as natural questions on geometry, explicit global structures of these real algebraic functions and properties are hard to understand. For example, can we know about preimages? As another example, can we know about the real polynomials for our desired zero sets and real algebraic manifolds? 
\end{Prob}
\cite{kitazawa3} is one of our pioneering study. \cite{kitazawa4} is the abstract of our related talk in a conference. The work is followed by us in \cite{kitazawa6, kitazawa7, kitazawa8}. Our new study is on smooth functions which may be non-proper whereas these studies show explicit construction of real algebraic functions on non-singular real algebraic closed manifolds.

\subsection{Manifolds and maps.}
Let $X$ be a topological space homeomorphic to some cell complex of a finite dimension. We can define its dimension $\dim X$ as a unique integer. A topological manifold is well-known to be homeomorphic to some CW complex. A smooth manifold is well-known to be homeomorphic to some polyhedron. We can also define the structure of a certain polyhedron for a smooth manifold in a canonical and unique way. This is a so-called PL manifold. We do not need to understand such theory precisely here.
  
Let ${\mathbb{R}}^k$ denote the $k$-dimensional Euclidean space, which is a simplest $k$-dimensional smooth manifold. This is also the Riemannian manifold endowed with the standard Euclidean metric. Let $\mathbb{R}:={\mathbb{R}}^1$ and $\mathbb{Z} \subset \mathbb{R}$ denote the set of all integers. Let $\mathbb{N} \subset \mathbb{Z}$ denote the set of all positive ones.
For each point $x \in {\mathbb{R}}^k$, let $||x|| \geq 0$ denote the distance between $x$ and the origin $0$ where the metric is the standard Euclidean metric.
This is also naturally a ({\it non-singular}) real algebraic manifold. This is also a so-called ({\it non-singular}) {\it Nash} manifold and a real analytic manifold. 
This real algebraic manifold is also the {\it $k$-dimensional real affine space}. Let $S^k:=\{x \in {\mathbb{R}}^{k+1} \mid ||x||=1\}$ denote the $k$-dimensional unit sphere. This
is a $k$-dimensional smooth compact submanifold of ${\mathbb{R}}^{k+1}$. It has no boundary. It is connected for any positive integer $k \geq 1$. It is a discrete set with exactly two points for $k=0$. It is a {\it non-singular} {\it real algebraic} set, which is the zero set of the real polynomial $||x||^2={\Sigma}_{j=1}^{k+1} {x_j}^2$ with $x:=(x_1,\cdots,x_{k+1})$.
Let $D^k:=\{x \in {\mathbb{R}}^{k} \mid ||x|| \leq 1\}$ denote the $k$-dimensional unit disk. It is a $k$-dimensional smooth compact and connected submanifold of ${\mathbb{R}}^{k}$ for any non-negative integer $k>0$. It is also a {\it semi-algebraic} set.

Let $c:X \rightarrow Y$ be a differentiable map from a differentiable manifold $X$ into another differentiable manifold $Y$. $x \in X$ is a {\it singular} point of the map if the rank of the differential at $x$ is smaller than the minimum between the dimensions $\dim X$ and $\dim Y$. We call $c(x)$ a {\it singular value} of $c$.
Let $S(c)$ denote the {\it singular set} of $c$. It is defined as the set of all singular points of $c$.
Unless otherwise stated, here, differentiable maps are smooth maps, which are maps of the class $C^{\infty}$.

A {\it canonical projection of the Euclidean space ${\mathbb{R}}^k$} is defined as the smooth surjective map mapping 
each point $x=(x_1,x_2) \in {\mathbb{R}}^{k_1} \times {\mathbb{R}}^{k_2}={\mathbb{R}}^k$ to the first component $x_1 \in {\mathbb{R}}^{k_1}$ with the conditions on the dimensions $k_1, k_2>0$ and $k=k_1+k_2$. Let ${\pi}_{k,k_1}:{\mathbb{R}}^{k} \rightarrow {\mathbb{R}}^{k_1}$ denote this map. We can define a {\it canonical projection of the unit sphere $S^{k-1}$} as the restriction of this there.

\subsection{Our main results.}


Let $X \subset \mathbb{R}$ be a subset. Let $a \in X$. Let $X_{a} \subset X$ denote the set of all elements of $X$ smaller than or equal to $a$.

The {\it real algebraic} set $X \in {\mathbb{R}}^k$ {\it defined by $l \geq 0$ real polynomials} is the intersection of the zero sets for these $l$ real polynomials. In the case $l=0$, it is ${\mathbb{R}}^k$.
In short, (non-singular) real algebraic manifolds are based on such sets and (non-singular) {\it Nash} manifolds are based on {\it semi-algebraic} sets, defined by considering not only the zero sets of real polynomials, but also the inequalities.

Our real algebraic manifolds are unions of connected components of some real algebraic sets defined by finitely many real polynomials unless otherwise stated. {\it Non-singular} real algebraic manifolds are defined by these polynomials. More precisely, such notions are defined by the ranks of the maps defined canonically from these polynomials with implicit function theorem.  
\begin{MainThm}
	\label{mthm:1}

	Let $l>1$ be an integer. Let $\{t_j\}_{j=1}^l$ be an increasing sequence of real numbers. Let $l_{{\mathbb{N}}_{l-1}}:{\mathbb{N}}_{l-1} \rightarrow \{0,1\}$ be a map. Let $m$ be a sufficiently large integer. Then we have a suitable $m$-dimensional non-singular real algebraic connected manifold $M$ with no boundary and a smooth real algebraic function $f:M \rightarrow \mathbb{R}$ enjoying the following properties.
	\begin{enumerate}
		\item $f(M)=[t_1,t_{l}]$ and the image $f(S(f))$ of the singular set of $f$ is $\{t_j\}_{j=1}^l$.
		\item $f^{-1}(t)$ is closed and connected if $t \in (t_{j},t_{j+1})$ and $l_{{\mathbb{N}}_{l-1}}(j)=0$. $f^{-1}(t)$ is connected and non-compact and it has no boundary if $t \in (t_{j},t_{j+1})$ and $l_{{\mathbb{N}}_{l-1}}(j)=1$. $f^{-1}(t)$ is connected and homeomorphic to a CW complex whose dimension is at most $m-1$ for any $t \in f(M)$. 
	\end{enumerate}
Furthermore, for a suitable integer $n>1$ and another sufficiently large integer $m>n$, we can suitably have a desired $m$-dimensional manifold $M$ and a desired function $f:M \rightarrow \mathbb{R}$ as the composition of a nice smooth real algebraic map on $M$ into ${\mathbb{R}}^n$ with a canonical projection. The nice smooth map into ${\mathbb{R}}^n$ is presented as a very explicit map in Main Theorem \ref{mthm:2}, later.

	\end{MainThm}

Main Theorem \ref{mthm:2} shows that the map into ${\mathbb{R}}^n$ can be constructed as one having a globally simple structure. Furthermore, this is very explicit and we can know the real polynomial for the definition of the real algebraic manifold (set). 
This is a variant of our main result of \cite{kitazawa8}, which is withdrawn due to an improvement. There we have constructed the manifolds as Nash ones and the functions and maps as the restrictions of smooth real algebraic functions and maps to semi-algebraic sets. Main Theorems here are also motivated by some differential topological viewpoint or \cite{kitazawa2} and there we have constructed smooth functions with prescribed preimages on non-compact manifolds with no boundaries. The functions of \cite{kitazawa2} are non-proper in general. We also construct smooth functions which are not real analytic there.


The next section explains about Main Theorems including our proofs. As our related previous work, we have constructed our functions and maps avoiding existence theory and approximations. The third section is a kind of appendices and related problems are presented.
\\
\ \\
\noindent {\bf Conflict of interest.} \\
The author was a member of the project JSPS Grant Number JP17H06128 and the project JSPS KAKENHI Grant Number JP22K18267 "Visualizing twists in data through monodromy" (Principal Investigator: Osamu Saeki). Their support has helped the author to do this study. 
The author is a researcher at Osaka Central
Advanced Mathematical Institute (OCAMI researcher). Note that he is not employed there. This is for our studies and helps our studies. \\
\ \\
{\bf Data availability.} \\
Data essentially supporting our present study are all in the
 paper.
\section{On Main Theorems.}
\label{sec:2}
\subsection{Important subsets in the real affine spaces.}
\label{subsec:2.1}
We introduce several subsets in the real affine spaces. They are important in proving Main Theorems.
\subsubsection{$L_{{\rm P},p_1,p_2}$.}
\label{subsubsec:2.1.1}
 For distinct two points $p_1,p_2 \in {\mathbb{R}}^2$, let $L_{{\rm P},p_1,p_2} \subset {\mathbb{R}}^2$ denote the straight line, which is uniquely defined and regarded as a copy of the $1$-dimensional real affine space embedded in the $2$-dimensional real affine space ${\mathbb{R}}^2$ by the canonically defined smooth real algebraic embedding. This is also an {\it affine subspace in ${\mathbb{R}}^2$}, defined later in Definition \ref{def:1}.

\subsubsection{$C_{{\rm H},(-\infty,a],b,c}$.}
\label{subsubsec:2.1.2}
 Let $a$, $b$ and $c>0$ be real numbers.
We consider the subset in ${\mathbb{R}}^2$ represented
as $\{(x_1,x_2) \in {\mathbb{R}}^2 \mid (x_1-a)(x_2-b)=c\}$. Let it be denoted by $C_{{\rm H},(-\infty,a],b,c}$. 
This is the real algebraic set defined by the single real polynomial and this is a $1$-dimensional non-singular real algebraic manifold. It consists of exactly two connected components. These connected components are denoted by $C_{{\rm H}_{+},(-\infty,a],b,c}:=\{(x_1,x_2) \in C_{{\rm H},(-\infty,a],b,c} \mid x_1>a, x_2>b\}$ and $C_{{\rm H}_{-},(-\infty,a],b,c}:=\{(x_1,x_2) \in C_{{\rm H},(-\infty,a],b,c} \mid x_1<a, x_2<b\}$, respectively. Last, this is for a hyperbola.
\subsubsection{$C_{{\rm H},[a,\infty),b,c}$.} \label{subsubsec:2.1.3} Let $a$, $b$ and $c<0$ be real numbers.
We consider the subset in ${\mathbb{R}}^2$ represented
as $\{(x_1,x_2) \in {\mathbb{R}}^2 \mid (x_1-a)(x_2-b)=c\}$. Let it be denoted by $C_{{\rm H},[a,\infty),b,c}$. This is the real algebraic set defined by the single real polynomial and this is a $1$-dimensional non-singular real algebraic manifold. It consists of exactly two connected components. These connected components are denoted by $C_{{\rm H}_{+},[a,\infty),b,c}:=\{(x_1,x_2) \in C_{{\rm H},[a,\infty),b,c} \mid x_1<a, x_2>b\}$ and $C_{{\rm H}_{-},[a,\infty),b,c}:=\{(x_1,x_2) \in C_{{\rm H},[a,\infty),b,c} \mid x_1>a, x_2<b\}$, respectively. Last, this is also for a hyperbola.
\subsubsection{$B_{{\rm E},\{a_j\}_{j=1}^k,\{r_j\}_{j=1}^k}$.} 
\label{subsubsec:2.1.4}
 Let $\{a_j\}_{j=1}^k$ be a sequence of real numbers. Let $\{r_j\}_{j=1}^k$ be a sequence of positive numbers.
We consider the subset in ${\mathbb{R}}^k$ represented
as $\{(x_1,\cdots x_k) \in {\mathbb{R}}^k \mid {\Sigma}_{j=1}^k \frac{{(x_j-a_j)}^2}{r_j} \leq 1\}$. Let it be denoted by $B_{{\rm E},\{a_j\}_{j=1}^k,\{r_j\}_{j=1}^k}$. 
This is a $k$-dimensional smooth manifold diffeomorphic to the unit disk $D^k$. This is also a semi-algebraic set.
Its boundary is a ($k-1$)-dimensional non-singular real algebraic manifold and the real algebraic set defined by the single real polynomial ${\Sigma}_{j=1}^k \frac{{(x_j-a_j)}^2}{r_j}-1$. This is diffeomorphic to $S^{k-1}$ of course. This is for a $k$-dimensional ellipsoid and its boundary.
\subsubsection{Affine transformations and some classes of sets.}
\label{subsubsec:2.1.5}
Respecting these subsets for example, we define some important subsets. 
We also regard ${\mathbb{R}}^k$ as the $k$-dimensional real vector space in the canonical way.
A linear transformation on a linear space means a linear map which is also a linear isomorphism there.
An {\it affine transformation on the real affine space ${\mathbb{R}}^k$} is a map represented as the composition of a linear transformation on ${\mathbb{R}}^k$ with an operation considering the sum with a fixed value $a \in {\mathbb{R}}^k$ or the operation mapping $x$ to $x+a$. 
\begin{Def}
	\label{def:1}
	\begin{enumerate}
		\item An {\it affine subspace in ${\mathbb{R}}^k$} is ${\mathbb{R}}^k$ itself or a non-singular real algebraic submanifold embedded by the smooth real algebraic embedding mapping $x \in {\mathbb{R}}^{k_1}$ to $(x,0) \in {\mathbb{R}}^{k_1} \times {\mathbb{R}}^{k_2}={\mathbb{R}}^k$ with the conditions $k_1,k_2>0$ and $k_1+k_2=k$. It is also an {\it affine subspace in ${\mathbb{R}}^k$} if it is mapped onto such a subspace by some affine transformation on the real affine space ${\mathbb{R}}^k$.  
		\item A {\it {\rm (}$k-1${\rm )}-dimensional cylinder of a hyperbola in ${\mathbb{R}}^k$} is a non-singular real algebraic manifold and it is also the real algebraic set defined by a naturally defined single real polynomial and represented as a subset in \ref{subsubsec:2.1.2} or \ref{subsubsec:2.1.3} in the case $k=2$ or a form of the product of a subset of \ref{subsubsec:2.1.2} or \ref{subsubsec:2.1.3} in ${\mathbb{R}}^2$ and ${\mathbb{R}}^{k-2}$ in the case $k \geq 3$. A non-singular real algebraic submanifold mapped onto such a subspace by some affine transformation on the real affine space ${\mathbb{R}}^k$ is also called in this way.
		 
		\item A {\it $k$-dimensional ellipsoid in ${\mathbb{R}}^k$} is $B_{{\rm E},\{a_j\}_{j=1}^k,\{r_j\}_{j=1}^k}$ or a semi-algebraic set mapped onto such a subspace by some affine transformation on the real affine space ${\mathbb{R}}^k$.
		
		\item A {\it $k$-dimensional cylinder of some ellipsoid in ${\mathbb{R}}^k$} is $B_{{\rm E},\{a_j\}_{j=1}^{k^{\prime}},\{r_j\}_{j=1}^{k^{\prime}}} \times {\mathbb{R}}^{k-k^{\prime}} \subset {\mathbb{R}}^{k^{\prime}} \times {\mathbb{R}}^{k-k^{\prime}}$ with some integer satisfying $1 \leq k^{\prime}<k$ or a semi-algebraic set mapped onto such a subspace by some affine transformation on the real affine space ${\mathbb{R}}^k$.
	\end{enumerate}
\end{Def}
\subsection{A proposition for our proof of Main Theorems.}
For one of our main ingredients in our paper, we review construction of smooth real algebraic maps first presented in \cite{kitazawa3} and followed by \cite{kitazawa7, kitazawa8} for example.
For a finite set $X$, let $|X| \in \mathbb{N} \bigcup \{0\}$ denote its size.
\begin{Def}
\label{def:2}
Let $D$ be a connected open set in ${\mathbb{R}}^n$ enjoying the following properties. 
\begin{enumerate}
\item There exists a positive integer $l>0$.
\item There exists a family $\{f_j\}_{j=1}^l$ of $l$ real polynomials with $n$ variables. 
\item $D:={\bigcap}_{j=1}^l \{x \in {\mathbb{R}}^n \mid f_j(x)>0\}$ and for the closure $\overline{D}$ of $D$, $\overline{D}={\bigcap}_{j=1}^l \{x \in {\mathbb{R}}^n \mid f_j(x) \geq 0\}$. They are also semi-algebraic sets.
\item $S_j$ is a connected component of the real algebraic set $\{x \in {\mathbb{R}}^n \mid f_j(x)=0\}$ defined by $f_j(x)$ and a non-singular connected real algebraic hypersurface.
\item $(\{x \in {\mathbb{R}}^n \mid f_j(x)=0\}-S_j) \bigcap \overline{D}$ is empty and $\overline{D}$ is in ${\mathbb{R}}^n-(\{x \in {\mathbb{R}}^n \mid f_j(x)=0\}-S_j)$, which is an open set.
\item Distinct hypersurfaces in the family $\{S_j\}$ intersect satisfying the following conditions. In other words, "transversality" is satisfied. 
\begin{enumerate}
\item Each intersection ${\bigcap}_{j \in \Lambda} S_j$ ($\Lambda \subset {\mathbb{N}}_{l}$) is empty or a non-singular real algebraic hypersurface with no boundary and of dimension $n-|\Lambda|$ for each non-empty set $\Lambda$.
\item Let $e_j:S_j \rightarrow {\mathbb{R}}^n$ denote the canonically defined smooth real algebraic embedding. For each intersection ${\bigcap}_{j \in \Lambda} S_j$ as before and each point $p$ there, the following two agree.
\begin{enumerate}
	\item The image of the differential of the canonically defined embedding ${\bigcap}_{j \in \Lambda} e_j$ of ${\bigcap}_{j \in \Lambda} S_j$ at $p$.
	\item The intersection of the images of all differentials of the embeddings in ${\{e_j\}}_{j \in \Lambda}$ at $p$.
\end{enumerate} 
\end{enumerate}
\end{enumerate}
Then $D$ is said to be a {\it normal and convenient domain}. We also call it an {\it NCD}. 
\end{Def}
\begin{Rem}
\label{rem:1}
"Normal and convenient domain" or "NCD" seems to be identical to our paper, in addition to the withdrawn paper of us \cite{kitazawa8}. 
\end{Rem}
\begin{Rem}
	\label{rem:2}
	In Definition \ref{def:2}, in this case of non-singular hypersurfaces $S_j$, so-called normal vectors at each point in each intersection can be chosen as ones which are mutually independent. Of course, the number of the mutually independent normal vectors are same as the number of the distinct hypersurfaces intersecting there.
	Our transversality can be also discussed in this way.
\end{Rem}
The following reviews main ingredients of \cite{kitazawa3, kitazawa7, kitazawa8}.
\begin{Prop}
	\label{prop:1}
For an NCD $D \subset {\mathbb{R}}^n$ and an arbitrary sufficiently large integer $m$, we have some $m$-dimensional non-singular real algebraic connected manifold $M$ being also a connected component of the real algebraic set defined by some $l$ real polynomials and a smooth real algebraic map $f_D:M \rightarrow {\mathbb{R}}^n$ enjoying the following properties.
\begin{enumerate}
\item The image $f_D(M)$ is the closure $\overline{D}$ and $f_D(S(f_D))=\overline{D}-D$.
\item For each point $p \in {\mathbb{R}}^n$, the preimage ${f_D}^{-1}(p)$ is empty or diffeomorphic to the product of finitely many unit spheres and at most {\rm (}$m-n${\rm )}-dimensional. 
\end{enumerate} 
\end{Prop}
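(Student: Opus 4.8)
The plan is to build $M$ and $f_D$ by a fiberwise construction over $\overline{D}$, using the defining polynomials $f_1,\dots,f_l$ of the NCD as "radius-squared" functions for sphere bundles that degenerate exactly along the faces $S_j$. Concretely, fix integers $k_1,\dots,k_l \geq 1$ and set $m := n + \sum_{j=1}^l k_j$; I would take the candidate $M$ to be
\begin{equation}
	\widetilde{M} := \Bigl\{ (x,y_1,\dots,y_l) \in {\mathbb{R}}^n \times {\mathbb{R}}^{k_1} \times \cdots \times {\mathbb{R}}^{k_l} \;\Bigm|\; \|y_j\|^2 = f_j(x) \ \text{for } j=1,\dots,l \Bigr\},
\end{equation}
with $f_D$ the restriction of the canonical projection ${\mathbb{R}}^n \times {\mathbb{R}}^{k_1+\cdots+k_l} \to {\mathbb{R}}^n$. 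This is visibly a real algebraic set defined by the $l$ real polynomials $\|y_j\|^2 - f_j(x)$, and the fiber over $x$ is $\prod_j S^{k_j-1}_{\sqrt{f_j(x)}}$, i.e.\ a product of spheres whose $j$-th factor has radius $\sqrt{f_j(x)}$: empty when some $f_j(x)<0$, a genuine product of positive-dimensional spheres when $x \in D$, and a product in which the factors with $f_j(x)=0$ collapse to points when $x \in \overline{D}-D$. Hence property (1) on the image and on $f_D(S(f_D))$, and the statement of property (2) on the diffeomorphism type and dimension of preimages, both follow once we know $\widetilde{M}$ is a non-singular real algebraic manifold of dimension $m$; the dimension bound in (2) is $\sum_j(k_j-1) = m-n-l \leq m-n$.

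The substantive step is to verify non-singularity of $\widetilde{M}$ — equivalently, that the map $F(x,y) := (\|y_1\|^2-f_1(x),\dots,\|y_l\|^2-f_l(x))$ has differential of rank $l$ at every point of $F^{-1}(0)$ — and this is exactly where the NCD hypotheses (4), (5), (6) are used. The $y$-partial derivatives of the $j$-th component are $2y_j$, so at any point where all the $y_j$ with $f_j(x)=0$ happen to be nonzero we are already done; the only danger is at a point $(x,y)$ with $x$ lying in some intersection $\bigcap_{j\in\Lambda}S_j$ and $y_j=0$ for $j\in\Lambda$. There the differential of $F$ restricted to the relevant block is governed by the gradients $\nabla f_j(x)$, $j\in\Lambda$, and I would argue these are linearly independent: condition (4) says each $S_j$ is a non-singular hypersurface (so $\nabla f_j(x)\neq 0$ on $S_j$), condition (5) keeps $\overline{D}$ away from the other components of $\{f_j=0\}$ so that $S_j$ really is the zero locus seen near $\overline{D}$, and the transversality in (6) — spelled out via mutually independent normal vectors in Remark \ref{rem:2} — is precisely the statement that $\{\nabla f_j(x)\}_{j\in\Lambda}$ is linearly independent. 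Combining the nonzero $2y_j$ directions (for $j\notin\Lambda$) with the independent gradients $\nabla f_j(x)$ (for $j\in\Lambda$) gives rank $l$, so $\widetilde M$ is a non-singular real algebraic manifold of dimension $m = n + \sum_j k_j$ with no boundary. I expect this linear-algebra bookkeeping at the deepest strata to be the main obstacle: one has to be careful that the "transversality" clause is invoked in the exact form needed and that the embeddings $e_j$ and their differentials in Definition \ref{def:2}(6) translate correctly into a statement about gradients of the $f_j$.

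Finally I would pass from $\widetilde{M}$ to the connected manifold $M$ demanded in the statement. Since $D$ is connected by assumption and each fiber over a point of $D$ is connected (a product of positive-dimensional spheres, using $k_j\geq 2$), the preimage ${f_D}^{-1}(D)$ is connected, its closure is all of $\widetilde M$, hence $\widetilde M$ is connected and we may simply take $M := \widetilde M$ — it is a connected component of the real algebraic set $F^{-1}(0)$ defined by the $l$ polynomials $\|y_j\|^2-f_j(x)$, as required. (If for some reason one prefers not to impose $k_j\geq 2$, one instead takes $M$ to be the connected component of $\widetilde M$ containing ${f_D}^{-1}(x_0)$'s closure for a chosen $x_0\in D$; the image is unaffected since the closure of that component still surjects onto $\overline D$.) The "sufficiently large $m$" clause is harmless: any $m \geq n+l$ of the right parity relative to our freedom in choosing the $k_j$ can be realized, and enlarging some $k_j$ only enlarges the sphere factors without changing compactness or the dimension bound $m-n$ in (2). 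This completes the plan; the smoothness and real-algebraicity of $f_D$ are immediate since it is the restriction of a linear projection to a non-singular real algebraic set.
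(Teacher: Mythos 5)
Your proposal is correct and follows essentially the same route as the paper: the paper likewise defines $S:=\{(x,y)\mid f_j(x)-\|y_j\|^2=0,\ j\in{\mathbb{N}}_l\}$, takes $f_D$ to be the restriction of the canonical projection, and verifies non-singularity by the same two-case rank computation (nonzero $y_j$ blocks for the indices with $f_j(x)>0$, independent gradients $\nabla f_j(x)$ for the indices in $\Lambda$, via the transversality clause). Your accounting of where hypotheses (4), (5), (6) enter, and of connectedness via connected fibers over the connected set $D$, matches the paper's argument.
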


\begin{proof}

This essentially reviews a main result or Main Theorem 1 of \cite{kitazawa7} and \cite{kitazawa8}.

For example, we abuse the notation such as one from Definition \ref{def:2}.

For local coordinates and points for example, we use the notation like $x:=(x_1,\cdots,x_k)$ where $k>0$ is an arbitrary positive integer.

We define a set $S:= \{(x,y) \in \overline{D} \times {\mathbb{R}}^{m-n+l} \subset {\mathbb{R}}^n \times {\mathbb{R}}^{m-n+l}={\mathbb{R}}^{m+l} \mid f_j(x_1,\cdots,x_n)-||y_j||^2=0, j \in {\mathbb{N}_l}\}$ where $y_j:=(y_{j,1},\cdots,y_{j,d_j}) \in {\mathbb{R}}^{d_j}$ and $y:=(y_1,\cdots,y_l)$ with a suitable positive integer $d_j>0$.


 
We investigate the function defined canonically from the real polynomial $f_j(x_1,\cdots,x_n)-{\Sigma}_{j^{\prime}=1}^{d_j} {y_{j,j^{\prime}}}^2$. We investigate its partial derivative for variants $x_j$ and $y_{j,j^{\prime}}$. We also show that we can apply implicit function theorem to each point of the set $S$ to show that $S$ is non-singular. \\
\ \\
Case 1. \quad The case of a point $(x_0,y_0) \in S$ such that $y_0$ is not the origin. \\
\indent By the assumption $f_{j}(x_0)>0$ for each $j \in {\mathbb{N}}_l$. 
Here we calculate the partial derivative of the function for each variant $y_{j,j_0}$, at $(x_0,y_0)$.
For our notation on $y_0$, we introduce $y_0:=(y_{0,1},\cdots y_{0,l})$ and $y_{0,j}:=(y_{0,j,1},\cdots,y_{0,j,d_j}) \in {\mathbb{R}}^{d_j}$. 
As a result, the value is $2{y_{j,j_{0,0}}}=2y_{0,j,j_{0,0}} \neq 0$ for some variant $y_{j,j_{0,0}}$ with $j_0=j_{0,0}$. 
We calculate the partial derivative of the function for each variant $y_{j^{\prime},{j_0}^{\prime}}$ satisfying $j^{\prime} \neq j$. The value is $2y_{j^{\prime},{j_0}^{\prime}}=2y_{0,j^{\prime},{j_0}^{\prime}}=0$ for any variant $y_{j^{\prime},{j_0}^{\prime}}$ satisfying $j^{\prime} \neq j$. 

Based on this, we explain about the map into ${\mathbb{R}}^l$ obtained canonically from the family of the $l$ functions defined from the real polynomials.
The differential of the restriction of the function defined canonically from the real polynomial is not of rank $0$. This is not a singular point of this function defined from this polynomial. The map into ${\mathbb{R}}^l$ obtained canonically from the family of such $l$ functions is of rank $l$. \\
\ \\
Case 2. \quad The case of a point $(x_{\rm O},y_{\rm O}) \in S$ such that $y_{\rm O}$ is the origin.  \\
\indent For our notation on $y_{\rm O}$, we introduce $y_{\rm O}:=(y_{{\rm O},1},\cdots y_{{\rm O},l})$ and $y_{{\rm O},j}:=(y_{{\rm O},j,1},\cdots,y_{{\rm O},j,d_j}) \in {\mathbb{R}}^{d_j}$. In other words, "0" in "$y_0$" of Case 1 is changed into "${\rm O}$".
We calculate the partial derivative of the function for each variant $y_{j^{\prime},{j_0}^{\prime}}$ satisfying $j^{\prime} \neq j$, at $(x_{\rm O},y_{\rm O})$. The value is $2y_{j^{\prime},{j_0}^{\prime}}=2y_{{\rm O},j^{\prime},{j_0}^{\prime}}=0$ for any variant $y_{j^{\prime},{j_0}^{\prime}}$ satisfying $j^{\prime} \neq j$.

We may suppose the existence on some non-empty subset $J$ with the property that $x_{\rm O} \in S_{j}$ for each $j \in J \subset {\mathbb{N}}_l$ and that $x_{\rm O} \notin S_{j}$ for each $j \notin J$.

By the assumption, for the real polynomial $f_{j_1}(x)$, $f_{j_1}(x_{\rm O})> 0$ for each $j_1 \notin J$. Here we calculate the partial derivative of the function defined from the real polynomial $f_{j_J}(x)-{||y_{j_J}||}^2$ for each $j_J \in J$ and each variant $x_{j}$ ($1 \leq j \leq n$). 
 The function defined canonically from the real polynomial $f_{j_J}$ has no singular points on the hypersurface $S_{j_J}$ for each $j_J \in J$. This is since $S_{j}$ is non-singular for all $j \in {\mathbb{N}}_l$. From this, the partial derivative of the function defined from the real polynomial $f_{j_J}(x)-{||y_{j_J}||}^2$ is not $0$ for some variant $x_{j_{J,{\rm O}}}$ ($1 \leq j_{J,{\rm O}} \leq n$) with $j_J \in J$.
 
  We explain about the map into ${\mathbb{R}}^{|J|}$ obtained canonically from the $|J|$ real polynomials corresponding canonically to the $|J|$ variants. The rank of the differential at the point is $|J|$. This comes from the assumption on the "transversality" for the hypersurfaces $S_j$ in Definition \ref{def:1}.

We consider the canonically obtained map into ${\mathbb{R}}^{l-|J|}$ similarly respecting the remaining real polynomials. The rank of the differential at the point is $l-|J|$. We can know this from the fact that for each of these $l-|J|$ polynomials the partial derivative by some variant $y_{j_1,{j_{1,0}}^{\prime}}$ satisfying $j_1 \notin J$ is not $0$. We can also have such a case for the suitably and uniquely chosen $j_1 \notin J$ and some suitably chosen integer $1 \leq {j_{1,0}}^{\prime} \leq d_{j_1}$ which may not be unique in general. We cannot have such a case for any integer $j_1 \in {\mathbb{N}}_{l}-J$ and any integer $1 \leq {j_{1,0}}^{\prime} \leq d_{j_1}$. This also comes from an essential argument in Case 1. This is also presented in the beginning of our argument of Case 2 essentially.

Integrating these arguments here, we have a result as in Case 1. \\
\ \\
By our assumption and definition, for each point $x^{\prime}$ in ${\mathbb{R}}^n-\overline{D}$ sufficiently close to $\overline{D}$, we can define the set $S_{x^{\prime}}:= \{(x^{\prime},y) \in {\mathbb{R}}^n \times {\mathbb{R}}^{m-n+l} \subset {\mathbb{R}}^n \times {\mathbb{R}}^{m-n+l}={\mathbb{R}}^{m+l} \mid f_j(x_1,\cdots,x_n)-||y_j||^2=0, j \in {\mathbb{N}_l}\}$. We can also see that it is empty. This with implicit function theorem yields the fact that the set $S$ is an $m$-dimensional smooth closed and connected manifold and a non-singular real algebraic manifold. It is also a connected component of the real algebraic set defined by the $l$ real polynomials. 

We can also put $M:=S$ and define $f_D:M \rightarrow {\mathbb{R}}^n$ as the restriction of the canonical projection ${\pi}_{m+l,n}$. This completes the proof.
	
\end{proof}
\subsection{Some important NCDs.}
The complementary set of an affine subspace $L_{{\rm P},p_1,p_2} \subset {\mathbb{R}}^2$ in ${\mathbb{R}}^2$ consists of exactly two connected components. One of this is regarded as a connected component represented by either of the following two forms.
\begin{itemize}
\item $\{(x_1,x_2) \in {\mathbb{R}}^2 \mid x_2 \geq a_1 x_1+a_2\}$ where $a_1,a_2 \in \mathbb{R}$.
\item $\{(x_1,x_2) \in {\mathbb{R}}^2 \mid x_1 \geq a \}$ where $a \in \mathbb{R}$.
\end{itemize} 
Let $L_{{\rm P}_{+},p_1,p_2} \subset {\mathbb{R}}^2$ denote the closure of the connected component. 
Let $L_{{\rm P}_{-},p_1,p_2} \subset {\mathbb{R}}^2$ denote the closure of the other connected component.
The complementary set of $C_{{\rm H},(-\infty,a],b,c}$ or $C_{{\rm H},[a,\infty),b,c}$ in ${\mathbb{R}}^2$ consists of exactly two connected components. One of them is surrounded by $C_{{\rm H}_{+},(-\infty,a],b,c}:=\{(x_1,x_2) \in C_{{\rm H},(-\infty,a],b,c} \mid x_1>a, x_2>b\}$ and $C_{{\rm H}_{-},(-\infty,a],b,c}:=\{(x_1,x_2) \in C_{{\rm H},(-\infty,a],b,c} \mid x_1<a, x_2<b\}$ or $C_{{\rm H}_{+},[a,\infty),b,c}:=\{(x_1,x_2) \in C_{{\rm H},[a,\infty),b,c} \mid x_1<a, x_2>b\}$ and $C_{{\rm H}_{-},[a,\infty),b,c}:=\{(x_1,x_2) \in C_{{\rm H},[a,\infty),b,c} \mid x_1>a, x_2<b\}$ according to the chosen hyperbola $C_{{\rm H},(-\infty,a],b,c}$ or $C_{{\rm H},[a,\infty),b,c}$. Let $R_{{\rm H},(-\infty,a],b,c}$ denote the closure of the connected component for $C_{{\rm H},(-\infty,a],b,c}$. Let $R_{{\rm H},[a,\infty),b,c}$ denote the closure of the connected component for $C_{{\rm H},[a,\infty),b,c}$. See FIGURE \ref{fig:1}, showing an example for $b=0$, for example.

\begin{figure}
	
	\includegraphics[height=75mm, width=100mm]{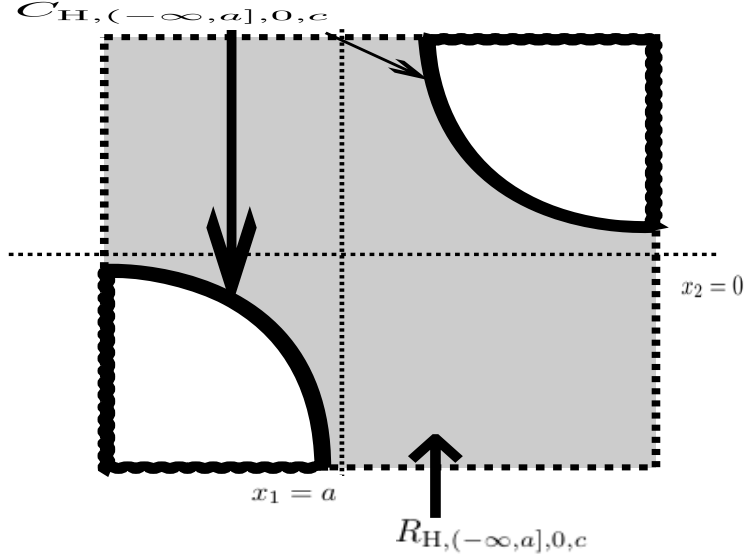}

	\caption{$R_{{\rm H},(-\infty,a],0,c}$ and some important (sub)sets of ${\mathbb{R}}^2$.}
	\label{fig:1}
\end{figure}
We can introduce the following subsets in ${\mathbb{R}}^n$ as the closures of explicit NCDs. They are important in our arguments.
\subsubsection{$R_{+,\{s_{1,j}\}_{j=1}^3,s_2,s}$ and $R_{-,\{s_{1,j}\}_{j=1}^3,s_2,s}$.}
Let $s_{1,1}<s_{1,2}<s_{1,3}$ and $s_2,s>0$ be real numbers. We also assume the following.

\begin{itemize}
\item $(s_{1,3},s_2) \in C_{{\rm H},(-\infty,s_{1,2}],0,s}$ and $(s_{1,3},s_2) \in C_{{\rm H}_{+},(-\infty,s_{1,2}],0,s}$.
\item There exists a unique negative number ${s_2}^{\prime}<0$ and $(s_{1,1},{s_2}^{\prime}) \in C_{{\rm H},(-\infty,s_{1,2}],0,s}$ and $(s_{1,1},{s_2}^{\prime}) \in C_{{\rm H}_{-},(-\infty,s_{1,2}],0,s}$ hold.
\end{itemize}
We define $R_{+,\{s_{1,j}\}_{j=1}^3,s_2,s}$ as the intersection \\
$L_{{\rm P}_{+},(s_{1,1},0),(s_{1,1},1)} \bigcap L_{{\rm P}_{+},(s_{1,1},0),(s_{1,2},0)} \bigcap L_{{\rm P}_{+},(s_{1,2},0),(s_{1,3},s_2)} \bigcap R_{{\rm H},(-\infty,s_{1,2}],0,s}$.

As before, let $s_{1,1}<s_{1,2}<s_{1,3}$ and $s_2, s>0$ be real numbers. We also assume the following.

\begin{itemize}
\item $(s_{1,1},s_2) \in C_{{\rm H},[s_{1,2},\infty),0,-s}$ and $(s_{1,1},s_2) \in C_{{\rm H}_{+},[s_{1,2},\infty),0,-s}$.
\item There exists a unique negative number ${s_2}^{\prime}<0$ and $(s_{1,3},{s_2}^{\prime}) \in C_{{\rm H},[s_{1,2},\infty),0,-s}$ and $(s_{1,3},{s_2}^{\prime}) \in C_{{\rm H}_{-},[s_{1,2},\infty),0,-s}$ hold.
\end{itemize}
We define $R_{-,\{s_{1,j}\}_{j=1}^3,s_2,s}$ as the intersection \\
$L_{{\rm P}_{-},(s_{1,3},0),(s_{1,3},1)} \bigcap L_{{\rm P}_{+},(s_{1,2},0),(s_{1,3},0)} \bigcap L_{{\rm P}_{+},(s_{1,2},0),(s_{1,1},s_2)} \bigcap R_{{\rm H},[s_{1,2},\infty),0,-s}$.
FIGURE 1 of \cite{kitazawa7} shows a subset very similar to $R_{-,\{s_{1,j}\}_{j=1}^3,s_2,s}$ in an explicit way. In our definition the thick curve in the parabola there is changed into a straight segment passing through the boundary point in the left of the connected component of the hyperbola in the left. 
\subsubsection{$R_{\{s_{1,j}\}_{j=1}^2,s_2,s}$.}
Let $s_{1,1}<s_{1,2}$ and $s_2,s>0$ be real numbers. 
We can define $R_{\{s_{1,j}\}_{j=1}^2,s_2,s}$ as the intersection
$L_{{\rm P}_{+},(s_{1,1},0),(s_{1,2},0)} \bigcap R_{{\rm H},[s_{1,1},\infty),0,-s} \bigcap R_{{\rm H},(-\infty,s_{1,2}],0,s}$ of the three disjoint non-singular connected curves.

\subsection{A proof of Main Theorems.}

\begin{MainThm}
\label{mthm:2}	
In Main Theorem \ref{mthm:1}, we can have the function as the composition of a suitable smooth real algebraic map into ${\mathbb{R}}^n$ enjoying the following properties with the canonical projection ${\pi}_{n,1}:{\mathbb{R}}^n \rightarrow \mathbb{R}$, mapping $(x_1,\cdots x_n)$ to $x_1$.
\begin{enumerate}
	\item The image of the map is the closure $\overline{D}$ of an NCD $D$ where we abuse the notation used in Definition \ref{def:2} for example.
	\item $\overline{D}-D$ is a union of finitely many {\rm (}$n-1${\rm )}-dimensional smooth connected submanifolds of $S_j$ as in Definition \ref{def:2}. Each $S_j$ is a connected component of the real algebraic set defined by a single real polynomial of degree at most $2$. More precisely, The family $\{S_j\}$ of hypersurfaces satisfies the following conditions.
	\begin{enumerate}
		\item It is an affine subspace of ${\mathbb{R}}^n$, a connected component of an {\rm (}$n-1${\rm )}-dimensional cylinder of a hyperbola in ${\mathbb{R}}^n$, or the boundary of some $n$-dimensional ellipsoid in ${\mathbb{R}}^n$.
		\item If $S_{j_0}$ is the boundary of some $n$-dimensional ellipsoid in ${\mathbb{R}}^n$, then for any other $S_j$ satisfying $j \neq j_0$, $S_j \bigcap S_{j_0}$ is empty.
	\end{enumerate}
\end{enumerate}
\end{MainThm}

\begin{proof}[A proof of Main Theorems]
We obtain a desired NCD $D$ as follows. \\
\ \\
Case 1. The case $l=2$. \\
In the case $l_{{\mathbb{N}}_{1}}(1)=0$, we put $n=2$ and define $D$ as the interior of $$L_{{\rm P}_{+},(t_1,0),(t_1,1)} \bigcap L_{{\rm P}_{+},(t_1,0),(t_2,0)} \bigcap L_{{\rm P}_{-},(t_2,0),(t_2,1)} \bigcap L_{{\rm P}_{-},(t_1,1),(t_2,1)}$$ and
in the case $l_{{\mathbb{N}}_{1}}(1)=1$, we put $n=2$ and define $D$ as the interior of $$L_{{\rm P}_{+},(t_1,0),(t_1,1)} \bigcap L_{{\rm P}_{+},(t_1,0),(t_2,0)} \bigcap L_{{\rm P}_{-},(t_2,0),(t_2,1)}$$ to complete the proof. \\
\ \\
Case 2. The case $l=3$. \\
We put $n=2$ and define $D$ as the interior of either of the following four to complete the proof. 
\begin{itemize}
\item $L_{{\rm P}_{+},(t_1,0),(t_1,1)} \bigcap L_{{\rm P}_{+},(t_1,0),(t_2,0)} \bigcap L_{{\rm P}_{+},(t_2,0),(t_3,1)} \bigcap L_{{\rm P}_{-},(t_1,1),(t_3,1)}$ in the case $l_{{\mathbb{N}}_{2}}(j)=0$ for each $j$.
\item $L_{{\rm P}_{+},(t_1,0),(t_1,1)} \bigcap L_{{\rm P}_{+},(t_1,1),(t_2,0)} \bigcap L_{{\rm P}_{+},(t_2,0),(t_3,1)} \bigcap L_{{\rm P}_{-},(t_3,0),(t_3,1)}$ in the case $l_{{\mathbb{N}}_{2}}(j)=1$ for each $j$.
\item $R_{+,\{t_j\}_{j=1}^3,0,1}$ in the case $l_{{\mathbb{N}}_{2}}(1)=1$ and $l_{{\mathbb{N}}_{2}}(2)=0$.
\item $R_{-,\{t_j\}_{j=1}^3,0,1}$ in the case $l_{{\mathbb{N}}_{2}}(1)=0$ and $l_{{\mathbb{N}}_{2}}(2)=1$.
\end{itemize}
\ \\
\ \\
Case 3. Remaining general cases $l \geq 4$. \\
Here let $\{t_{i(j)}\}_{j=1}^{l^{\prime}} \subset \{t_j\}_{j=1}^l$ be a subsequence of numbers consisting of all numbers $i(j) \in {\mathbb{N}}_{l-1}$ such that $l_{{\mathbb{N}}_{l-1}}(i(j))=1$. $l^{\prime}$ is a non-negative integer which may be $0$. In the case $l^{\prime}=0$, the sequence is empty. \\
\ \\

We define $n$ and $D$ as follows.
We put $D_0$ as the interior of $$L_{{\rm P}_{+},(t_1,0),(t_1,1)} \bigcap L_{{\rm P}_{+},(t_1,0),(t_{l},0)} \bigcap L_{{\rm P}_{-},(t_{l},0),(t_{l},1)} \bigcap L_{{\rm P}_{-},(t_1,1),(t_{l},1)} \subset {\mathbb{R}}^2$$ and we can see that $D_0$ is an NCD. We can define
 $D_1$ as an open set of ${\mathbb{R}}^2 \times {\mathbb{R}}^{l^{\prime}+1}=\mathbb{R} \times {\mathbb{R}}^{l^{\prime}+2}={\mathbb{R}}^n$ enjoying the following rule. Here we put $t_{j,1}:=t_{i(j)}$ and $t_{j,2}:=t_{i(j)+1}$.

Let $x:=(x_1,\cdots,x_{l^{\prime}+3}) \in {\mathbb{R}}^{l^{\prime}+3}$. $x \in D_1$ if and only if the following three hold. 
\begin{itemize}
\item $(x_1,x_2) \in D_0$.
\item $(x_1,x_{j+2})$ is in the interior of $R_{\{t_{j,j^{\prime}}\}_{j^{\prime}=1}^2,0,1}$ for each $j \in {\mathbb{N}}_{l^{\prime}}$.
\item $-R<x_{l^{\prime}+3}<R$ for some positive real number $R$.
\end{itemize}

For each integer $1 \leq j \leq l-3$, we choose a suitable and sufficiently small $B_{{\rm E},\{a_{j,j^{\prime}}\}_{j^{\prime}=1}^{l^{\prime}+3},\{r_{j,j^{\prime}}\}_{j^{\prime}=1}^{l^{\prime+3}}}$ satisfying $a_{j,1}=\frac{t_{j+1}+t_{j+2}}{2}$ and $r_{j,1}=\frac{{(t_{j+2}-t_{j+1})}^2}{4}$. We can choose these $n$-dimensional ellipsoids in ${\mathbb{R}}^n$ disjointly in the interior of $D_1$ and we do. We remove the interiors. Our desired set $D$ is the interior of the resulting set.

\ \\

Proposition \ref{prop:1} yields our desired map $f_D:M \rightarrow {\mathbb{R}}^n$.
We explain about some important remarks. 
We can know types of each resulting hypersurface $S_j$ by the construction. We can consider the natural connections on Euclidean spaces as the natural Riemannian manifolds and consider the notion of (mutually) "{\it parallel}" objects such as {\it parallel} tangent vectors and {\it parallel} subsets for example. The following list shows all of these hypersurfaces $S_j$ here.
Here let $e_i \in {\mathbb{R}}^{l^{\prime}+3}$ denote the tangent vector at the origin $0 \in {\mathbb{R}}^{l^{\prime}+3}$ represented as the vector whose $i$-th component is $1$ and whose $i^{\prime}$-th component is $0$ for any integer $i^{\prime} \neq i$ satisfying $1 \leq i^{\prime} \leq l^{\prime}+3$.  
\begin{itemize}
	\item Two mutually disjoint affine subspaces parallel to the affine subspace $\{0\} \times {\mathbb{R}}^{l^{\prime}+2}$. At each point there, each normal vector is regarded to be parallel to some vector represented by the form $t e_1$ with some number $t \neq 0$.
	\item Two mutually disjoint affine subspaces parallel to the affine subspace $\mathbb{R} \times \{0\} \times {\mathbb{R}}^{l^{\prime}+1}$. At each point there, each normal vector is regarded to be parallel to some vector represented by the form $t e_2$ with some number $t \neq 0$.
	\item Two mutually disjoint affine subspaces parallel to the affine subspace ${\mathbb{R}}^{l^{\prime}+2} \times \{0\}$. At each point there, each normal vector is regarded to be parallel to some vector represented by the form $t e_{l^{\prime}+3}$ with some number $t \neq 0$.
	\item Exactly $l^{\prime}$ affine subspaces. The $i$-th affine subspace in these $l^{\prime}$ affine subspaces is equal to ${\mathbb{R}}^{i+1} \times \{0\} \times {\mathbb{R}}^{l^{\prime}-i+1}$. At each point there, each normal vector is regarded to be parallel to some vector represented by the form $t e_{i+2}$ with some number $t \neq 0$.
	\item Exactly $2l^{\prime}$ cylinders of hyperbolas.
	For each integer $1 \leq i \leq l^{\prime}$, the ($2i-1$)-th hypersurface and the ($2i$)-th one here are mutually disjoint. These two are represented as subsets of ${\mathbb{R}}^{l^{\prime}+3}$ obtained in the following steps.
	\begin{itemize}
		\item We choose two subsets $C_{{\rm H}_{+},[t_{i,1},\infty),0,1}$ and $C_{{\rm H}_{+},(
-\infty,t_{i,2}],0,1}$. Consider the product of each subset and ${\mathbb{R}}^{l^{\prime}+1}$, This new subset is a subset of ${\mathbb{R}}^2 \times {\mathbb{R}}^{l^{\prime}+1}={\mathbb{R}}^{l^{\prime}+3}$. 
		\item We map the previously obtained subsets by an affine transformation $\sigma:{\mathbb{R}}^{l^{\prime}+3} \rightarrow {\mathbb{R}}^{l^{\prime}+3}$ defined uniquely in the following way.
		\begin{itemize}
			\item For each $x \in {\mathbb{R}}^{l^{\prime}+3}$, the 1st component of $\sigma(x)$ does not change under the transformation $\sigma$ and it is equal to $x_1$.
			\item For each $x \in {\mathbb{R}}^{l^{\prime}+3}$, the $j$-th component of $\sigma(x)$ may change under the transformation $\sigma$ and equal to $x_{j+1}$ for $2 \leq j \leq i+1$.
			\item For each $x \in {\mathbb{R}}^{l^{\prime}+3}$, the ($i+2$)-th component of $\sigma(x)$ may change under the transformation $\sigma$ and equal to $x_{2}$.
				\item For each $x \in {\mathbb{R}}^{l^{\prime}+3}$, the $j$-th component of $\sigma(x)$ does not change under the transformation $\sigma$ and equal to $x_{j}$ for $i+3 \leq j \leq l^{\prime}+3$.
		\end{itemize}
	At each point there, each normal vector is regarded to be parallel to some vector represented by the form $t_{i,1}e_{1}+t_{i,2}e_{i+2}$ with some numbers $t_{i,1} \neq 0$ and $t_{i,2} \neq 0$.
	Furthermore, these two are also apart from the $i$-th affine subspace equal to ${\mathbb{R}}^{i+1} \times \{0\} \times {\mathbb{R}}^{l^{\prime}-i+1}$ presented before.
	\end{itemize}
	\item The boundaries of some $n$-dimensional ellipsoids in ${\mathbb{R}}^n$. The $n$-dimensional ellipsoids in ${\mathbb{R}}^n$ are mutually disjoint and they are also apart from the other hypersurfaces presented here. 
\end{itemize}

We discuss transversality. Observe the list and see the normal vectors explicitly. Recall also Remark \ref{rem:2} for example. We can see that the transversality is satisfied. 

We discuss singularities of the function $f$.
For the boundary of each $n$-dimensional ellipsoid in ${\mathbb{R}}^n$, only points regarded as the two "poles" are regarded as points in this connected component of $\overline{D}-D$ whose preimages (for our map $f_D$) contain some singular points of the function $f:={\pi}_{n,1} \circ f_D$. On singular points of the function $f$, we can explain similarly about points in the two real affine subspaces in ${\mathbb{R}}^{l^{\prime}+3}$ parallel to $\{0\} \times {\mathbb{R}}^{l^{\prime}+2}$ or the set $\{t_1,t_l\} \times {\mathbb{R}}^{l^{\prime}+2}$.
In the image $\overline{D} \subset {\mathbb{R}}^n$ of the map $f_D$, except such points, the preimages contain no singular points of $f$.

This completes the proof.
	\end{proof}

We can have another similar result. 
\begin{MainThm}
	\label{mthm:3}
	In Main Theorem \ref{mthm:1}, let $l \neq 3$. Instead of Main Theorem \ref{mthm:2}, we can have the function as the composition of a suitable smooth real algebraic map into ${\mathbb{R}}^n$ enjoying the following properties with the canonical projection ${\pi}_{n,1}:{\mathbb{R}}^n \rightarrow \mathbb{R}$, mapping $(x_1,\cdots x_n)$ to $x_1$.
	\begin{enumerate}
		\item The image of the map is the closure $\overline{D}$ of an NCD $D$ where we abuse the notation used in Definition \ref{def:2} for example.
		\item $\overline{D}-D$ is a union of finitely many {\rm (}$n-1${\rm )}-dimensional smooth connected submanifolds of $S_j$ as in Definition \ref{def:2}. Each $S_j$ is a connected component of the real algebraic set defined by a single real polynomial of degree at most $2$. More precisely, The family $\{S_j\}$ of hypersurfaces satisfies the following conditions.
		\begin{enumerate}
			\item It is an affine subspace of ${\mathbb{R}}^n$, a connected component of an {\rm (}$n-1${\rm )}-dimensional cylinder of a hyperbola in ${\mathbb{R}}^n$, the boundary of some $n$-dimensional ellipsoid in ${\mathbb{R}}^n$, or the boundary of some $n$-dimensional cylinder of some ellipsoid in ${\mathbb{R}}^n$ represented as the product of a 2-dimensional ellipsoid in ${\mathbb{R}}^2$ and ${\mathbb{R}}^{n-2}$.
			\item If $l_{{\mathbb{N}}_{l-1}}({\mathbb{N}}_{l-1})=\{0\}$, then each $S_{j}$ is the boundary of some $n$-dimensional ellipsoid and these hypersurfaces are mutually disjoint.
			\item If $l_{{\mathbb{N}}_{l-1}}({\mathbb{N}}_{l-1})=\{0,1\}$, then each hypersurface $S_{j_0}$ which is the boundary of some $n$-dimensional ellipsoid and any hypersurface $S_{j}$ satisfying $j \neq j_0$ are mutually disjoint.
		\end{enumerate}
	\end{enumerate}
	\end{MainThm}

	\begin{proof}[A proof]
		We obtain a desired NCD $D$ as follows. \\
		\ \\
		Case 1. The case $l=2$. \\
		In the case $l_{{\mathbb{N}}_{1}}(1)=0$, we put $n=2$ and define $D$ as the interior of
		$B_{{\rm E},\{a_{0,j^{\prime}}\}_{j^{\prime}=1}^{2},\{r_{0,j^{\prime}}\}_{j^{\prime}=1}^{2}}$ satisfying $a_{0,1}=\frac{t_1+t_2}{2}$ and $r_{j,1}=\frac{{(t_2-t_1)}^2}{4}$.
		
		\noindent In the case $l_{{\mathbb{N}}_{1}}(1)=1$, we put $n=2$ and define $D$ as the interior of
		$B_{{\rm E},\{\frac{t_1+t_2}{2}\},\{\frac{{(t_2-t_1)}^2}{4}\}} \times \mathbb{R} \subset {\mathbb{R}}^2$. \\
		\ \\
		Case 2. Remaining general cases $l \geq 4$. \\
		In the case $l_{{\mathbb{N}}_{l-1}}(
		{\mathbb{N}}_{l-1}
		)=\{0\}$, we put $n=3$ and define $D$ as the interior of the set obtained in the following way.
		\begin{itemize}
			\item We choose $B_{{\rm E},\{a_{0,j^{\prime}}\}_{j^{\prime}=1}^{3},\{r_{0,j^{\prime}}\}_{j^{\prime}=1}^{3}}$
			 satisfying $a_{0,1}=\frac{t_1+t_l}{2}$ and $r_{0,1}=\frac{{(t_l-t_1)}^2}{4}$.
			\item We choose the previous set as a sufficiently large set. We can choose a family $\{B_{{\rm E},\{a_{j,j^{\prime}}\}_{j^{\prime}=1}^{3},\{r_{j,j^{\prime}}\}_{j^{\prime}=1}^{3}}\}$ of $n$-dimensional ellipsoids in ${\mathbb{R}^n}$ satisfying $a_{j,1}=\frac{t_
				{j+1}+
				t_{j+2}}{2}$ and $r_{j,1}=\frac{{(t_{j+2}-t_{j+1}	)}^2}{4}$ in the interior disjointly for $1 \leq j \leq l-3$. We remove their interiors from $B_{{\rm E},\{a_{0,j^{\prime}}\}_{j^{\prime}=1}^{3},\{r_{0,j^{\prime}}\}_{j^{\prime}=1}^{3}}$.
		\end{itemize}
		
		
	We consider the case
$l_{{\mathbb{N}}_{
l-1}}({\mathbb{N}}_{
l-1})=\{0,1\}$. Here, as in the proof of Main Theorem \ref{mthm:2}, let $\{t_{i(j)}\}_{j=1}^{l^{\prime}} \subset \{t_j\}_{j=1}^l$ be a subsequence of numbers consisting of all numbers $i(j) \in {\mathbb{N}}_{l-1}$ such that $l_{{\mathbb{N}}_{l-1}}(i(j))=1$. $l^{\prime}$ is a positive integer. \\
		\ \\
		
		We define $n$ and $D$ as follows.
		We put $D_0$ as the interior of
		$B_{{\rm E},\{a_{0,j^{\prime}}\}_{j^{\prime}=1}^{2},\{r_{0,j^{\prime}}\}_{j^{\prime}=1}^{2}}$ satisfying $a_{0,1}=\frac{t_1+t_l}{2}$ and $r_{0,1}=\frac{{(t_l-t_1)}^2}{4}$ and we can see that $D_0$ is an NCD. We can define
		$D_1$ as an open set of ${\mathbb{R}}^2 \times {\mathbb{R}}^{l^{\prime}+1}=\mathbb{R} \times {\mathbb{R}}^{l^{\prime}+2}={\mathbb{R}}^n$ enjoying the following rule. Here we put $t_{j,1}:=t_{i(j)}$ and $t_{j,2}:=t_{i(j)+1}$.
		
		Let $x:=(x_1,\cdots,x_{l^{\prime}+3}) \in {\mathbb{R}}^{l^{\prime}+3}$. $x \in D_1$ if and only if the following three hold. 
		\begin{itemize}
			\item $(x_1,x_2) \in D_0$.
			\item $(x_1,x_{j+2})$ is in the interior of $R_{\{t_{j,j^{\prime}}\}_{j^{\prime}=1}^2,0,1}$ for each $j \in {\mathbb{N}}_{l^{\prime}}$.
			\item $-R<x_{l^{\prime}+3}<R$ for some positive real number $R$.
		\end{itemize}
		
		For each integer $1 \leq j \leq l-3$, we choose a suitable and sufficiently small $B_{{\rm E},\{a_{j,j^{\prime}}\}_{j^{\prime}=1}^{l^{\prime}+3},\{r_{j,j^{\prime}}\}_{j^{\prime}=1}^{l^{\prime+3}}}$ satisfying $a_{j,1}=\frac{t_{j+1}+t_{j+2}}{2}$ and $r_{j,1}=\frac{{(t_{j+2}-t_{j+1})}^2}{4}$. We can choose these $n$-dimensional ellipsoids in ${\mathbb{R}}^n$ disjointly in the interior of $D_1$ and we do. We remove the interiors. Our desired set $D$ is the interior of the resulting set.
		
		\ \\

		Proposition \ref{prop:1} yields our desired map $f_D:M \rightarrow {\mathbb{R}}^n$.
		We explain about some important remarks. 
		We can know types of each resulting hypersurface $S_j$ by the construction. We can consider the natural connections on Euclidean spaces as the natural Riemannian manifolds as in the proof of Main Theorem \ref{mthm:2}. The following list shows all of these hypersurfaces $S_j$ here.
		As in the proof of Main Theorem \ref{mthm:2}, let $e_i \in {\mathbb{R}}^{l^{\prime}+3}$ denote the tangent vector at the origin $0 \in {\mathbb{R}}^{l^{\prime}+3}$ represented as the vector whose $i$-th component is $1$ and whose $i^{\prime}$-th component is $0$ for any integer $i^{\prime} \neq i$ satisfying $1 \leq i^{\prime} \leq l^{\prime}+3$.  
		\begin{itemize}
			\item The boundary of the ($l^{\prime}+3$)-dimensional cylinder $\partial \overline{D_0} \times {\mathbb{R}}^{l^{\prime}+1} \subset {\mathbb{R}}^{l^{\prime}+3}$ of the ellipsoid $\overline{D_0}$ in ${\mathbb{R}}^{l^{\prime}+3}$. At each point there, normal vectors are regarded to be parallel to some vector represented by the form $t_1 e_1+t_2 e_2$ with some pair $(t_1,t_2) \neq (0,0)$ of real numbers.
			\item Two mutually disjoint affine subspaces parallel to the affine subspace ${\mathbb{R}}^{l^{\prime}+2} \times \{0\}$. At each point there, normal vectors are regarded to be parallel to some vector represented by the form $t e_{l^{\prime}+3}$ with some number $t \neq 0$.
			\item Exactly $l^{\prime}$ affine subspaces. The $i$-th affine subspace in these $l^{\prime}$ affine subspaces is equal to ${\mathbb{R}}^{i+1} \times \{0\} \times {\mathbb{R}}^{l^{\prime}-i+1}$. At each point there, normal vectors are regarded to be parallel to some vector represented by the form $t e_{i+2}$ with some number $t \neq 0$.
			\item Exactly $2l^{\prime}$ cylinders of hyperbolas.
			For each integer $1 \leq i \leq l^{\prime}$, the ($2i-1$)-th hypersurface and the ($2i$)-th one here are mutually disjoint. These two are represented as subsets of ${\mathbb{R}}^{l^{\prime}+3}$ obtained in the following steps.
			\begin{itemize}
				\item We choose two subsets $C_{{\rm H}_{+},[t_{i,1},\infty),0,1}$ and $C_{{\rm H}_{+},(
					-\infty,t_{i,2}],0,1}$. Consider the product of each subset and ${\mathbb{R}}^{l^{\prime}+1}$, This new subset is a subset of ${\mathbb{R}}^2 \times {\mathbb{R}}^{l^{\prime}+1}={\mathbb{R}}^{l^{\prime}+3}$. 
				\item We map the previously obtained subsets by an affine transformation $\sigma:{\mathbb{R}}^{l^{\prime}+3} \rightarrow {\mathbb{R}}^{l^{\prime}+3}$ defined in the following way.
				\begin{itemize}
					\item For each $x \in {\mathbb{R}}^{l^{\prime}+3}$, the 1st component of $\sigma(x)$ does not change under the transformation $\sigma$ and it is equal to $x_1$.
					\item For each $x \in {\mathbb{R}}^{l^{\prime}+3}$, the $j$-th component of $\sigma(x)$ may change under the transformation $\sigma$ and equal to $x_{j+1}$ for $2 \leq j \leq i+1$.
					\item For each $x \in {\mathbb{R}}^{l^{\prime}+3}$, the ($i+2$)-th component of $\sigma(x)$ may change under the transformation $\sigma$ and equal to $x_{2}$.
					\item For each $x \in {\mathbb{R}}^{l^{\prime}+3}$, the $j$-th component of $\sigma(x)$ does not change under the transformation $\sigma$ and equal to $x_{j}$ for $i+3 \leq j \leq l^{\prime}+3$.
				\end{itemize}
				At each point there, each normal vector is regarded to be parallel to some vector represented by the form $t_{i,1}e_{1}+t_{i,2}e_{i+2}$ with some numbers $t_{i,1} \neq 0$ and $t_{i,2} \neq 0$.
				Furthermore, these two are also apart from the $i$-th affine subspace equal to ${\mathbb{R}}^{i+1} \times \{0\} \times {\mathbb{R}}^{l^{\prime}-i+1}$ presented before.
			\end{itemize}
			\item The boundaries of some $n$-dimensional ellipsoids in ${\mathbb{R}}^n$. The $n$-dimensional ellipsoids in ${\mathbb{R}}^n$ are mutually disjoint and they are also apart from the other hypersurfaces presented here. 
		\end{itemize}
		
		We discuss transversality. Observe the list and see the normal vectors explicitly. Recall also Remark \ref{rem:2} for example. We can see that the transversality is satisfied. 
		
		We discuss singularities of the function $f$.
		For the boundary of each $n$-dimensional ellipsoid in ${\mathbb{R}}^n$, only points regarded as the two "poles" are regarded as points in this connected component of $\overline{D}-D$ whose preimages (for our map $f_D$) contain some singular points of the function $f:={\pi}_{n,1} \circ f_D$. On singular points of the function $f$, we can explain similarly about points in $\{(t_1,a_{0,2}),(t_l,a_{0,2})\} \times {\mathbb{R}}^{l^{\prime}+1}$. This is a ($l^{\prime}+1$)-dimensional affine subspace in ${\mathbb{R}}^{l^{\prime}+3}$.
		In the image $\overline{D} \subset {\mathbb{R}}^n$ of the map $f_D$, except such points, the preimages contain no singular points of $f$.
		
		This completes the proof.
	
\end{proof}
\begin{Rem}
\label{rem:3}
	This paper shows a variant of \cite{kitazawa8}, which is withdrawn due to an improvement. Construction of the real algebraic map into ${\mathbb{R}}^3$ of Main Theorem 2 of \cite{kitazawa8} aims to construct smooth real algebraic maps on non-singular real algebraic manifolds whereas we only succeed construction in cases the following two are satisfied "in terms of our Main Theorem \ref{mthm:1}".
	\begin{itemize}
		\item $l_{{\mathbb{N}}_{l-1}}(j)=0$ for $1<j<l-1$.
		\item At least one of $l_{{\mathbb{N}}_{l-1}}(1)=0$ or $l_{{\mathbb{N}}_{l-1}}(1)=0$ holds.
		\end{itemize}
	More precisely, we can check the version arXiv:2303.14988v2 and "A Proof of Main Theorems". More precisely, in the case either  $l_{{\mathbb{N}}_{l-1}}(1)=0$ or $l_{{\mathbb{N}}_{l-1}}(1)=0$ holds, check a case in Case 8 which is presented through Case 3 or Case 4 first.
	
	In constructing nice maps into ${\mathbb{R}}^3$ like ones in our Main Theorem \ref{mthm:2} in more general cases, we need semi-algebraic sets or ({\it non-singular}) Nash manifolds there.
We also consider construction such that the images of maps are represented as the intersections of the closures of open subsets of specific types of fixed Euclidean spaces in \cite{kitazawa7}.
For main results of \cite{kitazawa8} and our present paper, it seems that we can have various explicit construction. 

For example, if we admit the hypersurfaces $S_j$ to be more general ones regarded as some connected components of the real algebraic sets defined by (single) real polynomials of degrees at most $2$, what can we know? See Problem \ref{prob:3}, presented in the last.
	\end{Rem}

\section{Appendices, mainly problems related to our study.}
We present some problems related to our study.

For example, it may be important to explicitly give definitions of notions on graphs. However, we omit them. We expect that maybe at least we have some elementary knowledge on graphs. See also \cite{kitazawa1, kitazawa2, kitazawa3, kitazawa5, kitazawa6, kitazawa7, kitazawa8} for example.
\begin{Prob}
\label{prob:2}
	In Main Theorems, can we construct the function $f$ whose preimages have prescribed topologies?
\end{Prob}
In the case of smooth functions, \cite{kitazawa1, kitazawa2} are important. \cite{kitazawa1} considers the case of smooth functions on $3$-dimensional closed and orientable manifolds. According to this, we can have a smooth function with mild singularities of some natural class whose Reeb graph is an arbitrary finite graph and connected components containing no singular points of whose preimages are prescribed closed and connected surfaces. The singularities are, in short, singularities of {\it Morse} functions, {\it fold} maps, which are higher dimensional versions of Morse functions, and compositions of such functions and maps. See \cite{golubitskyguillemin} for fundamental theory of singularities of Morse functions, fold maps, and more general smooth maps. \cite{kitazawa5} generalizes (some) results of \cite{kitazawa1} for Morse functions on closed manifolds of general dimensions. 
\cite{kitazawa2} considers cases where connected components of preimages containing no singular points for the functions and the manifolds have no boundaries and may not be closed. This also constructs smooth functions which are not real analytic.
\cite{saeki} is also a related paper, which is based on our informal discussions on \cite{kitazawa1}. It considers very general cases. It also constructs smooth functions which are not real analytic.

\cite{kitazawa6} studies a very explicit case for smooth real algebraic functions on closed manifolds such that preimages are empty or connected. More precisely, (connected components of) the preimages are empty, spheres, or manifolds represented as connected sums of products of two spheres if they contain no singular points of the function.

\begin{Prob}
\label{prob:3}
The images of our real algebraic maps into ${\mathbb{R}}^n$, constructed in Main Theorem \ref{mthm:2} for example, are regarded as the closures of some $n$-dimensional open sets in ${\mathbb{R}}^n$. They collapse to some cell (CW) complexes whose dimensions are lower.
Can we consider suitable classes of graphs, CW complexes or more generally, cell complexes for such phenomena? Can we study about (the closures of) open sets surrounded by non-singular real algebraic hypersurfaces of some nice classes and collapsing to such nice complexes. In other words, can we study about nice {\it arrangements} of real algebraic hypersurfaces forming (the closures of) the open sets? 	
\end{Prob}
For this, \cite{bodinpopescupampusorea} is one of related studies. This is on realization of domains surrounded by non-singular connected real algebraic curves in the affine space ${\mathbb{R}}^2$ and collapsing to given graphs. 
In short, they first find desired domains topologically or in the smooth category. Next they apply a kind of approximations by real polynomials. This also has motivated us to obtain our related pioneering result \cite{kitazawa3}. For related studies, see \cite{kohnpieneranestadrydellshapirosinnsoreatelen} and see also \cite{sorea1, sorea2} for example. 

\begin{Prob}
\label{prob:4}
In our case of smooth real algebraic functions and maps on non-singular real algebraic (or more generally, Nash) manifolds, 
can we have explicit and nice affirmative answers for more general (Reeb) graphs? Here, we also need explicit and nice conditions on (the topologies and the differentiable structures of) connected components of preimages containing no singular points.
\end{Prob}
This seems to be very difficult. At present, we must restrict (our classes of Reeb) graphs to very simple or explicit ones as we do in \cite{kitazawa3, kitazawa6, kitazawa7, kitazawa8}. For example, related to our comments in Remark \ref{rem:3}, let us admit the hypersurfaces $S_j$ to be more general ones regarded as some connected components of the real algebraic sets defined by some (single) real polynomials of degrees at most $2$. What do we have then?

\end{document}